\newtheorem{theorem}{Theorem}[section]
\theoremstyle{definition}
\newtheorem{definition}[theorem]{Definition}
\theoremstyle{remark}
\newtheorem{remark}[theorem]{Remark}
\newcommand{\bbQ}{\mathbbm{Q}}
\newcommand{\coordsys}[4]{
\foreach \x in {#1,...,#3}
\draw[line width=.6pt,color=black!15,dashed] (\x,#2-.2) -- (\x,#4+.2);
\foreach \y in {#2,...,#4}
\draw[line width=.6pt,color=black!15,dashed] (#1-.2,\y) -- (#3+.2,\y);
\draw[->] (#1-.2,0) -- (#3+.2,0) node[right] {$a$};
\draw[->] (0,#2-.2) -- (0,#4+.2) node[above] {$b$};
\foreach \x in {#1,...,-1}
\draw[shift={(\x,0)},color=black] (0pt,2pt) -- (0pt,-2pt) node[below] {\footnotesize $\x$};
\foreach \x in {1,...,#3}
\draw[shift={(\x,0)},color=black] (0pt,2pt) -- (0pt,-2pt) node[below] {\footnotesize $\x$};
\foreach \y in {#2,...,-1}
\draw[shift={(0,\y)},color=black] (2pt,0pt) -- (-2pt,0pt) node[left] {\footnotesize $\y$};
\foreach \y in {1,...,#4}
\draw[shift={(0,\y)},color=black] (2pt,0pt) -- (-2pt,0pt) node[left] {\footnotesize $\y$};
}
\definecolor{codegreen}{rgb}{0,0.6,0}
\definecolor{codegray}{rgb}{0.5,0.5,0.5}
\definecolor{codepurple}{rgb}{0.58,0,0.82}
\definecolor{backcolour}{rgb}{0.95,0.95,0.92}
\lstdefinelanguage{m2}{
    morekeywords={
        chainComplex,
        monomialIdeal,
        prune,
        needsPackage,
        matrix,
        hashTable,
        numColumns,
        lift,
        apply,
        ring,
        ideal,
        dim,
        product,
        numgens,
        max,
        assert,
        betti,
        res,
        cokernel
        },
    morekeywords=[2]{
        HH,
        restart,
        false,
        true,
        id
    },
    morekeywords=[3]{
        ZZ,
        QQ
    },
    morekeywords=[4]{
        of,
        for,
        in,
        do
    }
}
\lstdefinestyle{mystyle}{
    backgroundcolor=\color{backcolour},   
    commentstyle=\color{codegreen},
    keywordstyle=\color{magenta},
    keywordstyle=[2]\color{blue},
    keywordstyle=[3]\color{green},
    keywordstyle=[4]\color{teal},
    numberstyle=\tiny\color{codegray},
    stringstyle=\color{codepurple},
    basicstyle=\ttfamily\footnotesize,
    breakatwhitespace=false,
    breaklines=true,
    captionpos=b,
    keepspaces=true,
    numbers=none,
    numbersep=5pt,
    showspaces=false,
    showstringspaces=false,
    showtabs=false,
    tabsize=2
}
\title{CellularResolutions M2 Package}
\author{Aleksandra Sobieska}
\address{Department of Mathematics, University of Wisconsin -- Madison, Madison, WI 53706}
\email{asobieska@math.wisc.edu}
\urladdr{\href{https://people.math.wisc.edu/~asobieska}{https://www.people.math.wisc.edu/~asobieska/}}
\author{Jay Yang}
\address{Department of Mathematics, Washington University in St. Louis, St. Louis, MO 63130}
\email{jayy@wustl.edu}
\urladdr{\href{https://www.math.wustl.edu/~jayy/}{https://www.math.wustl.edu/~jayy/}}
\date{\today}
\begin{document}


\maketitle

\begin{abstract}
Cellular resolutions are a technique for constructing resolutions of monomial ideals by giving a cell complex labeled by monomials, or more generally, by monomial modules. This \verb|Macaulay2| package allows us to work with cellular resolutions in a natural way.
\end{abstract}

\section{Introduction}

Simplicial resolutions were originally introduced in \cite{monomial-res} by Bayer, Peeva, and Sturmfels, and later generalized to cellular resolutions in \cite{cellular-res}. Further generalizations are also possible as in \cite{clark}. Since their introduction, they have been both a unifying theme theme, giving a geometric realization to previously disparate resolutions of monomial ideals, including the Taylor resolution \cite{taylor}, Lyubeznik resolution \cite{lyubeznik}, and Eliahou--Kervaire resolution \cite{mermin-ekres}, as well as a tool to construct new resolutions, for example, those supported by the Scarf complex \cite{scarf} and the hull complex \cite{cellular-res}.

Prior to this package, Macaulay2\cite{m2} had some support for simplicial resolutions via the\linebreak[4] \verb|SimplicialComplexes| package\cite{m2-simplicial-complexes}. However, \verb|SimplicialComplexes| focuses primarily on the Stanley-Reisner correspondence and therefore simplicial complexes; our goal is to offer an interface to work specifically with cellular complexes and cellular resolutions, allowing natural constructions and manipulation of these complexes, their labels, and the resolutions they support.

Furthermore, while the standard theory of cellular resolutions considers labels by monomials, this constraint is not needed for the development of the theory, and in fact removing such constraint can be useful for certain applications \cites{miller, yang}. In our package, we allow labels by monomials, monomial ideals, and, more generally, monomial submodules of a free module. The primary constraint is that there must be natural inclusions from the label of a cell to the labels of the cells in its boundary.

In shedding the restriction to cellular resolutions, we lose the capacity to check that a purported cell complex truly is a cell complex in the topological sense. However, this does not impair the functioning of the package. To wit, the package can be thought of as acting upon certain labeled posets. Furthermore, in nearly all cases, these will in fact be cellular, and the structure of the commands in our package encourages the creation of geometrically realizable cell complexes.

In the same vein, we do not actually store sufficient information to uniquely identify a geometric realization for our cell complexes. Instead, sufficient information is retained to preserve the inclusion structure of the complex, along with its homological properties.

As a consequence of these limitations and generalizations, the class of ``cellular resolutions'' captured by our package are of a slightly odd form, although they capture essentially all common cases.

\begin{definition}
A \emph{combinatorial cell complex} $\Delta$ is a collection of cells in non-negative dimensions such that each pair of cells $C, D\in \Delta$, with $\dim D + 1 = \dim C$ has an attaching degree $\alpha(C,D)~\in~\mathbb{Z}$. such that for every pair $A,C\in\Delta$ with $\dim A = \dim C + 2$
\[\sum_{\substack{B\in\Delta \\ \dim B = \dim C + 1}} \alpha(A,B)\alpha(B,C) = 0 \]
and if $\dim C=1$ then there are either 2 or 0 cells $D\in\Delta$ such that $\alpha(C,D)\neq 0$ and they have to be $\pm 1$.
\end{definition}

\begin{remark}
The above definition is somewhat non-standard, and, in particular, a combinatorial cell complex entirely forgets any topological data about the attaching map. However, at the level of cellular resolutions, that data is unimportant and, with the loss of topological specifications, combinatorial cell complexes need not represent actual topological cell complexes. In fact, one can construct combinatorial cell complexes for which no topological realization exists.
\end{remark}

This definition then allows for the definition of a chain complex and homology for a combinatorial cell complex. In particular, for any ring $R$, the $r$-chains are given by formal $R$-linear combinations of cells of dimension $r$ and the boundary map is given by
\[\partial([C]) = \sum_{\substack{D\in\Delta\\ \dim D + 1 = \dim C}} \alpha(C,D)[D].\]

This gives us homology of the cell complex over $R$, and now we can consider labeled cell complexes and the associated homology. This in itself is simply a reiteration of the standard topological theory of cell complexes. Next we define the main object of concern for this package.

\begin{definition}
Given a polynomial ring $S$, a \emph{labeled cell complex} is a tuple $(X,\{M_C\},\{i_{C,D}\})$ where $X$ is a combinatorial cell complex, $M_C$ is an $S$-module for each cell $C\in X$, and \\$i_{C,D}~:~M_C~\rightarrow~M_D$ is an inclusion such that 
\[i_{C,D}\circ i_{E,C} = i_{C',D}\circ i_{E,C'}\]
whenever $\alpha(C,D)\neq 0$.
\end{definition}

\begin{definition}[\cite{cellular-res}]
Let $F_{\bullet}(X)$ be the chain complex given by
\[F_{i}(X) = \bigoplus_{\substack{C\in X \\ \dim C = i}} M_C\]
\[\partial|_{M_C} = \sum_{D\in \partial C} \alpha(C,D) i_{C,D}\]
where $\alpha(C,D)$ is the attaching degree of the cell $D$ in $C$ and $i_{C,D}:M_{C}\rightarrow M_{D}$ is the canonical inclusion.
\end{definition}

Now the core theorem for cellular resolutions relates the homology of this chain complex to the homology over the coefficient ring of subcomplexes of the original simplicial complex.

\begin{theorem}
\label{thm:homology-general}
    Let $X$ be a labeled cell complex with labels given by multigraded sub-modules of a common multigraded module over a $k$-algebra for a field $k$ and inclusions given by maps of degree $\mathbf{0}$. For a multidegree $\mathbf{b}$, let $X_{\preceq \mathbf{b}}$ be the labeled subcomplex formed by the cells $C$ with $(M_C)_\mathbf{b}\neq 0$ where the labels are given by the $k$-vector space $(M_C)_\mathbf{b}$. Then 
    \[H_i(F_\bullet(X))_{\mathbf{b}} = H_i(X_{\preceq \mathbf{b}}).\]
\end{theorem}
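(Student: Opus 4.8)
The plan is to exploit the fact that everything in sight is multigraded and that the differential of $F_\bullet(X)$ preserves the multigrading, so that extracting the degree-$\mathbf{b}$ part is an exact operation which commutes with taking homology. Concretely, I would first establish that for any complex $C_\bullet$ of multigraded modules whose differentials are homogeneous of degree $\mathbf{0}$, one has $H_i(C_\bullet)_\mathbf{b} = H_i((C_\bullet)_\mathbf{b})$, where $(C_\bullet)_\mathbf{b}$ denotes the complex of $k$-vector spaces obtained by taking the degree-$\mathbf{b}$ piece in each homological degree. This holds because kernels, images, and quotients are all computed degree by degree: since $\partial_i$ is homogeneous we have $(\ker \partial_i)_\mathbf{b} = \ker((\partial_i)_\mathbf{b})$ and $(\operatorname{im}\partial_{i+1})_\mathbf{b} = \operatorname{im}((\partial_{i+1})_\mathbf{b})$, and forming the subquotient commutes with taking the $\mathbf{b}$-component. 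Applying this to $C_\bullet = F_\bullet(X)$ reduces the theorem to identifying $(F_\bullet(X))_\mathbf{b}$ with the chain complex $F_\bullet(X_{\preceq\mathbf{b}})$ computing $H_i(X_{\preceq\mathbf{b}})$.

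Next I would match the two complexes term by term. By definition $F_i(X) = \bigoplus_{\dim C = i} M_C$, so $(F_i(X))_\mathbf{b} = \bigoplus_{\dim C = i}(M_C)_\mathbf{b}$, and the summand $(M_C)_\mathbf{b}$ is nonzero precisely when $C$ is a cell of $X_{\preceq\mathbf{b}}$. Thus $(F_i(X))_\mathbf{b} = \bigoplus_{C \in X_{\preceq\mathbf{b}},\, \dim C = i}(M_C)_\mathbf{b} = F_i(X_{\preceq\mathbf{b}})$, using that the label of $C$ in $X_{\preceq\mathbf{b}}$ is by definition the $k$-vector space $(M_C)_\mathbf{b}$. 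For the differentials, since each inclusion $i_{C,D}$ has degree $\mathbf{0}$ it restricts to $(i_{C,D})_\mathbf{b}\colon (M_C)_\mathbf{b}\to (M_D)_\mathbf{b}$, and the degree-$\mathbf{b}$ part of $\partial|_{M_C} = \sum_{D} \alpha(C,D) i_{C,D}$ is exactly $\sum_D \alpha(C,D)(i_{C,D})_\mathbf{b}$, which is the boundary map of $F_\bullet(X_{\preceq\mathbf{b}})$. Hence the two complexes coincide on the nose.

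The one point requiring genuine care — and the main obstacle — is verifying that $X_{\preceq\mathbf{b}}$ is a legitimate labeled subcomplex, so that $H_i(X_{\preceq\mathbf{b}})$ is even defined. This amounts to showing the set of cells with $(M_C)_\mathbf{b}\neq 0$ is closed under passing to boundary faces: if $C\in X_{\preceq\mathbf{b}}$ and $\alpha(C,D)\neq 0$, I must show $D\in X_{\preceq\mathbf{b}}$. Here I would use that $i_{C,D}\colon M_C\to M_D$ is an inclusion; restricting an injective degree-$\mathbf{0}$ map to the degree-$\mathbf{b}$ component keeps it injective, so $(M_C)_\mathbf{b}\neq 0$ forces $(M_D)_\mathbf{b}\neq 0$. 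The remaining data of a labeled cell complex — the attaching degrees $\alpha(C,D)$ and the compatibility relations $i_{C,D}\circ i_{E,C} = i_{C',D}\circ i_{E,C'}$ — are inherited verbatim upon restriction to the degree-$\mathbf{b}$ components, so no further verification is needed there.

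Finally, I would assemble the pieces: combining the term-by-term identification $(F_\bullet(X))_\mathbf{b} = F_\bullet(X_{\preceq\mathbf{b}})$ with the exactness statement from the first step gives $H_i(F_\bullet(X))_\mathbf{b} = H_i((F_\bullet(X))_\mathbf{b}) = H_i(F_\bullet(X_{\preceq\mathbf{b}})) = H_i(X_{\preceq\mathbf{b}})$. I would also note that, in contrast with the classical monomial-label statement, there is no homological degree shift here: both sides are indexed directly by cell dimension through the functor $F_\bullet$, so the reduced-versus-unreduced homology conventions never enter.
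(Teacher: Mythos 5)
Your proof is correct and follows essentially the same route as the paper: extract the degree-$\mathbf{b}$ strand of $F_\bullet(X)$, observe that taking homology commutes with this because the differentials are homogeneous of degree $\mathbf{0}$, and identify the resulting complex of $k$-vector spaces with the chain complex of $X_{\preceq\mathbf{b}}$. The only difference is that you spell out details the paper leaves implicit, most usefully the verification that $X_{\preceq\mathbf{b}}$ is closed under passage to boundary cells via injectivity of the degree-$\mathbf{0}$ inclusions.
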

\begin{proof}
   Notice that $H_i(F_\bullet(X))_\mathbf{b}$ is computed precisely by the complex of $k$-vector spaces given by
   $(F_\bullet(X))_\mathbf{b}$, the subcomplex given by the degree $\mathbf{b}$ components of $F_\bullet(X)$.
   But this is exactly the chain complex given by taking $X_{\prec \mathbf{b}}$ with the labeling described in the statement of the theorem. Thus we have
    \[H_i(F_\bullet(X))_{\mathbf{b}} = H_i(X_{\preceq \mathbf{b}}),\]
    as desired.
\end{proof}

The primary use case is when $X$ is labeled by free submodules of a fine-graded polynomial ring. These are in one-to-one correspondence with the monomials, so we recover the standard formulation of a cell complex $X$ labeled by monomials $m_C$ satisfying the condition $m_D \mid m_C$ for $\alpha(C,D) \neq 0$ by simply using the labels given by the free module generated in the fine-graded multidegree of $m_C$.

Moreover, in the case of submodules of a fine-graded polynomial ring over a field, the vector space of degree $\alpha$ polynomials is always either $0$ or $1$ dimensional, so the resulting homology in Theorem~\ref{thm:homology-general} reduces exactly to the standard cellular homology, which allows for the following restatement of the classic result about cellular resolutions.

\begin{theorem}[\cite{cellular-res}]
\label{thm:resolutions}
    Let $X$ be a labeled cell complex with labels given by monomials $m_C$ for cells $C\in X$ satisfying the condition $m_D \mid m_C$ for $\alpha(C,D)\neq 0$. For a monomial $m$, let $X_{\preceq m}$ be the subcomplex formed by the cells $C$ with $m_C \mid m$. If $X_{\preceq m}$ is acyclic for each monomial $m$, then $F_{\bullet}(X)$ is a free resolution of the ideal $\left<m_v \ | \ v \in X(0) \right>$. 
    
    Moreover if $m_C \neq m_D$ for each pair of cells $C$ and $D$ where $\alpha(C,D) \neq 0$, then the resolution $F_{\bullet}(X)$ is also minimal.
\end{theorem}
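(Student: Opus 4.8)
The plan is to deduce this statement from Theorem~\ref{thm:homology-general} by realizing each monomial label as a rank-one free submodule of $S$, reducing the whole problem to the acyclicity hypothesis. Set $I := \langle m_v \mid v \in X(0)\rangle$, and for each cell $C$ take $M_C = (m_C) \subseteq S$, the principal ideal, regarded as a free module of rank one generated in the fine multidegree of $m_C$. The divisibility condition $m_D \mid m_C$ for $\alpha(C,D) \neq 0$ is exactly what is needed to define the inclusions $i_{C,D}\colon (m_C) \hookrightarrow (m_D)$, and these are homogeneous of degree $\mathbf{0}$; thus $(X,\{M_C\},\{i_{C,D}\})$ is a labeled cell complex of the type in Theorem~\ref{thm:homology-general}, and each $F_i(X) = \bigoplus_{\dim C = i}(m_C)$ is free. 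Since $(m_C)_\mathbf{b} \neq 0$ precisely when $m_C \mid \mathbf{x}^{\mathbf{b}}$, the subcomplex $X_{\preceq \mathbf{b}}$ of Theorem~\ref{thm:homology-general} coincides with $X_{\preceq \mathbf{x}^{\mathbf{b}}}$, and, as observed just before the statement, its labeled homology is ordinary cellular homology over $k$. Theorem~\ref{thm:homology-general} then gives $H_i(F_\bullet(X))_\mathbf{b} = H_i(X_{\preceq \mathbf{x}^{\mathbf{b}}})$.

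Next I would read off the homology in each degree. For $i \geq 1$, acyclicity of $X_{\preceq \mathbf{x}^{\mathbf{b}}}$ forces $H_i(X_{\preceq \mathbf{x}^{\mathbf{b}}}) = 0$, so $H_i(F_\bullet(X)) = 0$. For $i = 0$ I would first note that $X_{\preceq m}$ is a genuine subcomplex: if $m_C \mid m$ and $\alpha(C,D) \neq 0$ then $m_D \mid m_C \mid m$, so the vertex set is closed under passing to boundaries. Hence $X_{\preceq \mathbf{x}^{\mathbf{b}}}$ is nonempty if and only if it contains a vertex $v$, equivalently $m_v \mid \mathbf{x}^{\mathbf{b}}$, equivalently $\mathbf{x}^{\mathbf{b}} \in I$. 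When nonempty, acyclicity gives $H_0(X_{\preceq \mathbf{x}^{\mathbf{b}}}) = k$, and when empty the homology vanishes; thus $\dim_k H_0(F_\bullet(X))_\mathbf{b} = \dim_k I_\mathbf{b}$ for every $\mathbf{b}$.

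To upgrade this degreewise count to an isomorphism of $S$-modules, I would use the augmentation $\epsilon\colon F_0(X) = \bigoplus_{v}(m_v) \to S$ given by the inclusions $(m_v) \hookrightarrow S$, whose image is exactly $I$. The condition on $1$-cells in the definition of a combinatorial cell complex, that the two nonzero attaching degrees are $+1$ and $-1$, makes the two inclusions $(m_C) \to S$ through the endpoints of an edge cancel, so $\epsilon \circ \partial_1 = 0$ and $\epsilon$ descends to $\bar\epsilon\colon H_0(F_\bullet(X)) = \coker(\partial_1) \to I$ with $\Ima \bar\epsilon = I$. Since $\bar\epsilon$ is surjective in each multidegree and the two sides have equal, at most one-dimensional, graded pieces, $\bar\epsilon$ is an isomorphism. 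Therefore $F_\bullet(X)$ is a complex of free modules with $H_0 \cong I$ and $H_i = 0$ for $i \geq 1$, that is, a free resolution of $I$.

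Finally, for minimality I would compute the differential in the natural bases: restricted to $M_C = (m_C)$ with generator $m_C$, the map $\alpha(C,D)\, i_{C,D}$ sends $m_C \mapsto \alpha(C,D)\,(m_C/m_D)\, m_D$, so the matrix entry from $C$ to $D$ is $\alpha(C,D)\,(m_C/m_D)$. This entry is a unit precisely when $m_C/m_D$ is a nonzero constant, i.e.\ when $m_C = m_D$. Under the hypothesis that $m_C \neq m_D$ whenever $\alpha(C,D) \neq 0$, every nonzero entry is a non-constant monomial and hence lies in $\frakm$, so $\partial(F_i) \subseteq \frakm F_{i-1}$ and the resolution is minimal. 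The step I expect to be the main obstacle is the $i = 0$ analysis: identifying $H_0(F_\bullet(X))$ with $I$ itself rather than merely matching graded dimensions, which forces both the augmentation argument and the observation that $X_{\preceq m}$ is closed under taking faces.
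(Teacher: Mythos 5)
Your argument is correct and follows exactly the route the paper intends: the paper gives no separate proof of Theorem~\ref{thm:resolutions} (it cites \cite{cellular-res}), but the two paragraphs preceding it sketch precisely your reduction --- replace each monomial label by the rank-one free submodule in its fine multidegree and apply Theorem~\ref{thm:homology-general}, using that the graded pieces are at most one-dimensional so the labeled homology is ordinary cellular homology. Your filling in of the $H_0$/augmentation identification and the unit-entry criterion for minimality supplies the standard details the paper delegates to the citation.
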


Note that for this to be a free resolution, the labels must be free modules. For the remainder of this paper we will primarily focus on the case of free module labels, but the code does not require this.

The final standard modification to this construction is to instead use the augmented chain complex, and thus give a resolution of the module $R/I$ as opposed to the ideal $I$ as a module. For a CW complex this is usually accomplished by placing a copy of the field at homological degree $-1$ in the chain complex with an augmenting morphism from the $0$ chains. 

In the case of a labeled cell complex, the story is only slightly more subtle, but extending from the simplicial case, the answer becomes clear. In the simplicial setting, where reduced homology is the more natural homology, there is a $-1$-simplex corresponding to the empty face, and the label on this simplex gives the module in homological degree $-1$. And with submodules labeling the simplices, the usual label on the empty face is the ambient module. As such, our extension to the cellular case proceeds in the same manner, and for reduced homology, we add a copy of the ambient module in homological degree $-1$ to the chain complex.

\section{Usage}

Consider the monomial ideal $I = \langle yw, xyz, x^2y, z^4w \rangle$ in $S = \bbQ[x,y,z,w]$. We could opt for several different cell complexes supporting a resolution of $I$. 

\begin{figure}[!h] 
    \centering
    \begin{minipage}{0.45\textwidth}
    \centering
    \begin{tikzpicture}[scale=4]
	\tikzstyle{vertex}=[circle,thick,draw=black,fill=black,inner sep=0pt,minimum width=4pt,minimum height=4pt]
	
	\coordinate (A) at (0,0);
	\coordinate (B) at (1,-0.25); 
	\coordinate (C) at (1.25,0.25) {};
	\coordinate (D) at (0.75,1) {}; 
	
	\draw[fill=gray, opacity=0.5] (A) -- (B) -- (C) -- (D); 
	
	\node[vertex, label=left:$x^2y$] (m1) at (A) {}; 
	\node[vertex, label=below:$xyz$] (m2) at (B) {}; 
	\node[vertex, label=right:$yw$] (m3) at (C) {}; 
	\node[vertex, label=above:$z^4w$] (m4) at (D) {}; 
	
	\draw (m1) -- (m2) 
	    (m1) -- (m3)
	    (m1) -- (m4)
	    (m2) -- (m3)
	    (m2) -- (m4)
	    (m3) -- (m4); 
	
	\end{tikzpicture}
	\caption{Cell complex $\Delta_T$ supporting the Taylor resolution}
    \label{fig:tetrahedron}
	\end{minipage}
	\hfill
    \begin{minipage}{0.45\textwidth}
    \centering
    \begin{tikzpicture}[scale=4] 
	\tikzstyle{vertex}=[circle,thick,draw=black,fill=black,inner sep=0pt,minimum width=4pt,minimum height=4pt]
	
	\coordinate (A) at (0,0);
	\coordinate (B) at (1,-0.25); 
	\coordinate (C) at (1.25,0.25) {};
	\coordinate (D) at (0.75,1) {}; 
	
	\coordinate (AB) at (0.5,-0.125);
	\coordinate (AC) at (0.625,0.125);
	\coordinate (BC) at (1.125,0);
	\coordinate (CD) at (1,0.625); 
	\coordinate (ABC) at (0.75,-0.125);
	
	\draw[fill=gray, opacity=0.5] (A) -- (B) -- (C); 
	
	\node[vertex, label=left:$x^2y$] (m1) at (A) {}; 
	\node[vertex, label=below:$xyz$] (m2) at (B) {}; 
	\node[vertex, label=right:$yw$] (m3) at (C) {}; 
	\node[vertex, label=above:$z^4w$] (m4) at (D) {}; 
	
	\draw (m1) -- (m2) 
	    (m1) -- (m3)
	    (m2) -- (m3)
	    (m3) -- (m4); 
	    
	\node[label=below:$x^2yz$] (m12) at (AB) {};
	\node[label=above:$x^2yw$] (m13) at (AC) {}; 
	\node[label=right:$xyzw$] (m23) at (BC) {}; 
	\node[label=right:$yz^4w$] (m34) at (CD) {}; 
	\node[label=$x^2yzw$] (m123) at (ABC) {}; 
	
	\end{tikzpicture}
	
	\caption{Cell complex $\Delta$}
    \label{fig:tetrahedron2}
	\end{minipage}
\end{figure}

The most straightforward cellular resolution is the Taylor resolution, which is supported on the full $4$-simplex whose vertices are labeled with the generators of $I$. The underlying cell complex $\Delta_T$, pictured in Figure~1, can be easily constructed using the \texttt{taylorComplex} command.

\begin{lstlisting}[language=m2,style=mystyle]
i: S = QQ[x,y,z,w];
i: I = monomialIdeal (y*w, x*y*z, x^2*y, z^4*w);
i: DeltaT = taylorComplex I
o= DeltaT
o: CellComplex
\end{lstlisting}

From any cell complex, one obtains the corresponding chain complex with the \texttt{chainComplex} command. 

\begin{lstlisting}[language=m2,style=mystyle]
i: T = chainComplex DeltaT
o: S^1 <-- S^4 <-- S^6 <-- S^4 <-- S^1 
   -1      0       1       2       3
o= ChainComplex
\end{lstlisting}

The minimal resolution is supported on the complex $\Delta$, pictured in Figure~2. We construct $\Delta$ and the free resolution $F$ supported on $\Delta$ in our package in the following way. 

\begin{lstlisting}[language=m2,style=mystyle]
i: v1 = newCell({},y*w);
i: v2 = newCell({},x*y*z);
i: v3 = newCell({},x^2*y);
i: v4 = newCell({},z^4*w);
i: e12 = newCell({v1,v2});
i: e13 = newCell({v1,v3});
i: e23 = newCell({v2,v3});
i: e14 = newCell({v1,v4});
i: f123 = newCell({e12,e13,e23});
i: Delta = cellComplex(S, {f123,e14})
o= Delta
o: CellComplex
i: C = chainComplex Delta 
o= S^1 <-- S^4 <-- S^4 <-- S^1
   -1      0       1       2
o: ChainComplex
\end{lstlisting}

The general syntax to create a cell uses the \texttt{newCell(b,a)} command, where \texttt{b} is a list of boundary cells and \texttt{a} is the cell label. If \texttt{a} is omitted when the boundary cells have monomial labels, then the label for the cell is inferred by taking the least common multiple of its boundary labels. In particular, the default label for a cell with the empty boundary (a vertex in the simplicial case) is $1$.  Note that the chain complex $C$ includes a copy of $S$ in the $-1$ spot, as it computes reduced homology. 

This package extends the functor \texttt{HH} to compute the homology of the chain complex directly from the cell complex. We can also check minimality of the resolutions directly from the cell complexes using \texttt{isMinimal}. Here we apply \texttt{HH} and \texttt{isMinimal} to $\Delta$ and $\Delta_T$ to find that both $F$ and $T$ are resolutions of $S/I$, but that $F$ is minimal while $T$ is not. 

\begin{lstlisting}[language=m2,style=mystyle]
i: prune HH C
o= -1 : cokernel | yw xyz x2y z4w |
    0 : 0 
    1 : 0 
    2 : 0  
o: GradedModule
i: prune HH Delta
o= -1 : cokernel | yw xyz x2y z4w |
    0 : 0 
    1 : 0 
    2 : 0  
o: GradedModule
i: isMinimal DeltaT 
o= false 
i: isMinimal Delta 
o= true
\end{lstlisting}

Should the user find the automatic inclusion of the augmentation map in the chain complex troubling, one can specify that the augmentation map be omitted by setting the \texttt{Reduced} option in \texttt{chainComplex} to \texttt{false}. 

\begin{lstlisting}[language=m2,style=mystyle]
i: Cnonred = chainComplex(Delta, Reduced => false)
o= S^4 <-- S^4 <-- S^1        
   0       1       2
o: ChainComplex
\end{lstlisting}

Two prominent complexes in the study of cellular resolutions are the hull and Scarf complexes, also implemented. In this case, the hull complex produces the full simplicial complex and the Scarf complex is minimal and exact.

\begin{lstlisting}[language=m2,style=mystyle]
i: H = hullComplex I 
o= H
o: CellComplex
i: scarf = scarfComplex I
o= scarf
o: CellComplex
i: prune HH scarf
o= -1 : cokernel | yw xyz x2y z4w |
    0 : 0
    1 : 0
    2 : 0
o: GradedModule
\end{lstlisting}

The examples given are perhaps disingenuous -- it is possible for the hull complex to better capture the nature of our ideal. Here we recreate the hull complex from  Example~4.23 from \cite{miller-sturmfels-cca}, which supports a minimal resolution. The corresponding cell complex is shown in Figure~\ref{fig:hull}.

\begin{lstlisting}[language=m2,style=mystyle]
i: H2 = hullComplex monomialIdeal {x^2*z, x*y*z, y^2*z, x^3*y^5, x^4*y^4, x^5*y^3}
o= H2
o: CellComplex
i: isMinimal H2 
o= true
\end{lstlisting}

\begin{center}
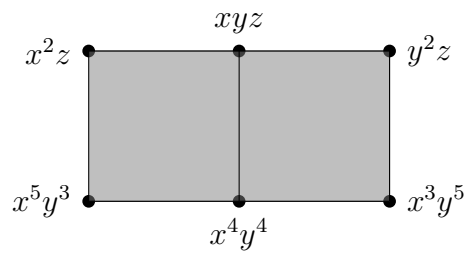
\begin{figure}[h]
\begin{tikzpicture}[scale=2]
	\tikzstyle{vertex}=[circle,thick,draw=black,fill=black,inner sep=0pt,minimum width=4pt,minimum height=4pt]

    \coordinate (A) at (0,0);
    \coordinate (B) at (1,0);
    \coordinate (C) at (2,0);
    \coordinate (D) at (0,1);
    \coordinate (E) at (1,1);
    \coordinate (F) at (2,1);

	\node[vertex, label=left:$x^2z$] (x2z) at (D) {}; 
    \node[vertex, label=above:$xyz$] (xyz) at (E) {};
    \node[vertex, label=right:$y^2z$] (y2z) at (F) {};
    \node[vertex, label=right:$x^3y^5$] (x3y5) at (C) {};
    \node[vertex, label=below:$x^4y^4$] (x4y4) at (B) {};
    \node[vertex, label=left:$x^5y^3$] (x5y3) at (A) {};

    \draw[fill=gray, opacity=0.5] (A) -- (C) -- (F) -- (D);

    \foreach \x\y in {A/B, B/C, D/E, E/F, A/D, B/E, C/F} \draw (\x) -- (\y);

\end{tikzpicture}
\caption{The hull complex $H_2$ supporting a minimal resolution of $\langle x^2z, xyz, y^2z, x^3y^5, x^4y^4, x^5y^3 \rangle$}
\label{fig:hull}
\end{figure}
\end{center}

The Scarf complex, on the other hand, may not be exact, but when it does support a resolution, it is a minimal one. In particular, the Scarf complex supports a resolution for generic monomial ideals \cite{monomial-res}*{Theorem~3.2}. Below is an example where the Scarf complex fails to be exact.

\begin{lstlisting}[language=m2,style=mystyle]
i: scarf2 = scarfComplex monomialIdeal {x*y, y*z, z*w, w*x}
o= scarf2
o: CellComplex
i: prune HH scarf2 
o= -1 : cokernel | zw xw yz xy |
    0 : 0
    1 : S^1
o: GradedModule
\end{lstlisting}


While it is not the focus of the package, \verb|CellularResolutions| can also be used to manipulate cell complexes, independent of their supported resolutions. In particular, we have functions \texttt{cellComplexSphere}, \texttt{cellComplexRPn}, \texttt{cellComplexTorus} which construct cell complexes for some common topological spaces. The arguments are the ambient ring for the labels and the dimension of the desired complex. The labeling is the one induced by setting all vertex labels to $1$. The code below also demonstrates the \texttt{cells} command, which returns a hashtable of the cells separated by dimension. 

\begin{lstlisting}[language=m2,style=mystyle]
i: S2 = cellComplexSphere(QQ,2)
o= S2
o: CellComplex
i: cells(S2) 
o= HashTable{0 => {Cell of dimension 0}}
             2 => {Cell of dimension 2}
o: HashTable
i: prune HH S2
o= -1 : 0  
    0 : 0  
    1 : 0
    2 : QQ^1
o: GradedModule
i: T3 = cellComplexTorus(QQ,3)
o= T3
o: CellComplex
i: prune HH T3 
o= -1 : 0 
    0 : 0  
    1 : QQ^3
    2 : QQ^3
    3 : QQ^1
o: GradedModule
i: Z2 = ZZ/2;
i: RP3 = cellComplexRPn(Z2,3)
o= RP3
o: CellComplex
i: prune HH RP3
o= -1 : 0 
    0 : 0  
    1 : Z2^1
    2 : Z2^1
    3 : Z2^1
o: GradedModule
\end{lstlisting}


Associated to our cell complex object is an underlying poset with respect to cell inclusion. This poset can be accessed with the \texttt{facePoset} command. 

\begin{lstlisting}[language=m2,style=mystyle]
i: facePoset RP3
o= Relation Matrix: | 1 1 1 1 |
                    | 0 1 1 1 |
                    | 0 0 1 1 |
                    | 0 0 0 1 |
o: Poset
\end{lstlisting}

Oftentimes cellular resolutions arise in the context of polyhedra, so the package allows for conversion of polyhedra or polyhedral complexes to cellular complexes. For example, we can easily create a tetrahedral cell complex from the polyhedral version. 

\begin{lstlisting}[language=m2,style=mystyle]
i: P = convexHull id_(ZZ^4);
i: Pcellular = cellComplex(S,P)
o= Pcellular
o: CellComplex
\end{lstlisting}

The default labeling for a cell complex arising from a polyhedron or polyhedral complex is a $1$ at every vertex. One can obtain more sophisticated labeling and relabeling in a few ways.

The relabel command takes as input a cell complex and a hashtable whose keys are the vertices of the complex with values corresponding to the new labels. For example, from the tetrahedral cell complex above, we can recreate the Taylor resolution of the monomial ideal $\langle yw, xyz, x^2y, z^4w \rangle$. 

\begin{lstlisting}[language=m2,style=mystyle]
i: verts = cells(0,Pcellular);
i: H = hashTable {verts#0 => y*w, verts#1 => x*y*z, verts#2 => x^2*y, verts#3 => z^4*w};
i: relabeledP = relabelCellComplex(Pcellular, H)
o= relabeledP
o: CellComplex
i: apply(cells(0,relabeledP),cellLabel)
o= {y*w, x^2*y, x*y*z, z^4*w}
o: List
\end{lstlisting}

\texttt{Labels} is an option available when creating a cell complex from a polyhedron or polyhedral complex; the corresponding polyhedral and cell complexes are shown in Figure~\ref{fig:relabel}.

\begin{lstlisting}[language=m2,style=mystyle]
i: R = QQ[a,b];
i: P1 = convexHull matrix {{5,3},{1,2}};
i: P2 = convexHull matrix {{3,2},{2,3}};
i: P3 = convexHull matrix {{2,0},{3,7}};
i: PC = polyhedralComplex {P1,P2,P3};
i: v = vertices PC;
i: H = hashTable for i to (numColumns v - 1) list (v_i => a^(lift(v_i_0,ZZ))*b^(lift(v_i_1,ZZ)));
i: PCcellular = cellComplex(R,PC, Labels => H);
i: apply(cells(0,PCcellular),cellLabel)
o= {a^3b^2 , a^2b^3 , a^5b, b^7}
o: List
i= apply(cells(1,PCcellular),cellLabel)
o: {a^5b^2, a^3b^3, a^2b^7}
o: List
\end{lstlisting}

\begin{center}
\begin{figure}
\begin{tikzpicture}[scale=0.75, vertices/.style={draw, fill=black, circle, inner sep=2pt}]
    \coordsys{-1}{-1}{8}{8}
    \node (m1) at (5,1) [vertices] {};
    \node (m2) at (3,2) [vertices] {};
    \node (m3) at (2,3) [vertices] {};
    \node (m4) at (0,7) [vertices] {};

    \draw [ultra thick, red] (m1) -- (m2);
    \draw [ultra thick, blue] (m2) -- (m3);
    \draw [ultra thick, purple] (m3) -- (m4);

    \node (m12) at (4,1.5) [label=above:\textcolor{red}{$P_1$}] {};
    \node (m23) at (2.5,2.5) [label=below left:\textcolor{blue}{$P_2$}] {};
    \node (m34) at (1,5) [label=right:\textcolor{purple}{$P_3$}] {};
\end{tikzpicture}
\hspace{0.75cm}
\begin{tikzpicture}[vertices/.style={draw, fill=black, circle, inner sep=2pt}]
    \node (m1) at (0,0) [vertices, label=$a^5b$] {};
    \node (m2) at (2,0) [vertices, label=$a^3b^2$] {};
    \node (m3) at (4,0) [vertices, label=$a^2b^3$] {};
    \node (m4) at (6,0) [vertices, label=$b^7$] {}; 

    \draw [ultra thick, red] (m1) -- (m2);
    \draw [ultra thick, blue] (m2) -- (m3);
    \draw [ultra thick, purple] (m3) -- (m4);

    \node (m12) at (1,0) [label=below:\textcolor{red}{$a^5b^2$}] {};
    \node (m23) at (3,0) [label=below:\textcolor{blue}{$a^3b^3$}] {};
    \node (m34) at (5,0) [label=below:\textcolor{purple}{$a^2b^7$}] {};

    \node (spacer) at (3,-4) {};
\end{tikzpicture}
\caption{A polyhedral complex and corresponding relabeled cell complex}
\label{fig:relabel}
\end{figure}
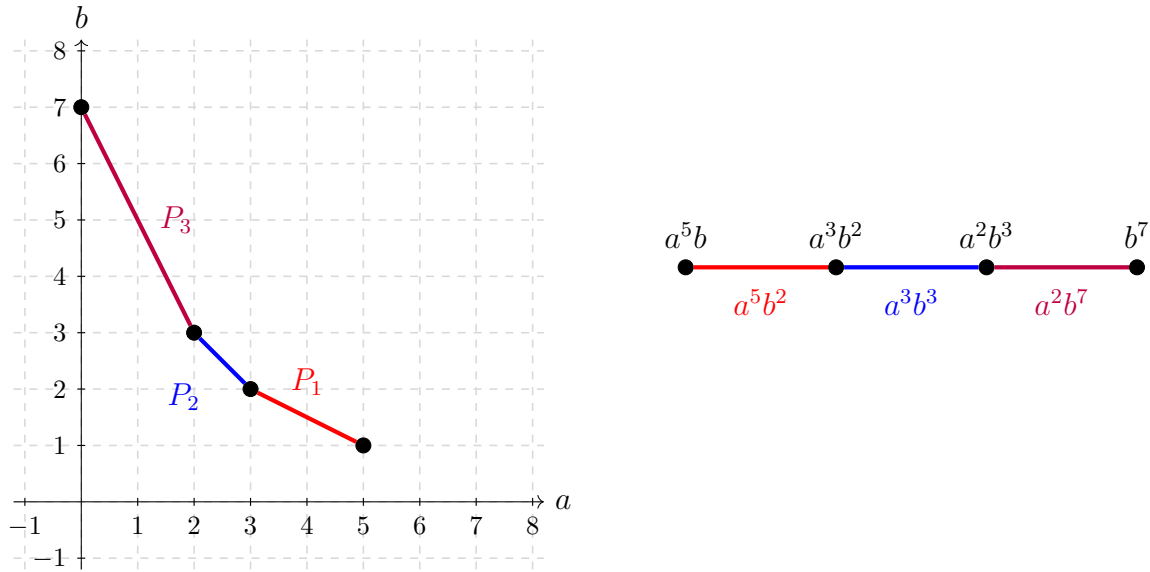
\end{center}



As a more involved example, let us consider the question of the minimal free resolution of the irrelevant ideal for a toric variety. Explicitly, we will consider the case of $\mathbb{P}^1\times \mathbb{P}^2$. In general for a projective toric variety, we can use the polytope associated to the variety to construct a cellular resolution for the irrelevant ideal. In this case our ring is given by $S=k[x_0,x_1,x_2,x_3,x_4]$ and the irrelevant ideal $B$ is $\left<x_0,x_1\right>\cap \left<x_2,x_3,x_4\right> = \left<x_0x_2,x_0x_3,x_0x_4,x_1x_2,x_1x_3,x_1x_4\right>$.

\begin{lstlisting}[language=m2, style=mystyle]
i: needsPackage "NormalToricVarieties"
i: X = toricProjectiveSpace(1) ** toricProjectiveSpace(2);
i: S = ring X;
i: B = ideal X;
i: Sigma = fan X;
i: P = polytope Sigma;
i: d = dim P;
i: F = faces_d P;
i: m = product(apply(numgens S, i -> S_i));
i: G = apply(max X, l -> m//product(apply(l,i -> S_i)));
i: H = hashTable apply(#G, i -> (j := F#i#0#0;((vertices P)_j,G_i)))
i: C = cellComplex(S,P,Labels => H);
i: Cres = (chainComplex C)[-1]
o: S^1  <-- S^6  <-- S^9  <-- S^5  <-- S^1
   0        1        2        3        4
o: ChainComplex
i: assert(betti (res B) == betti Cres)
i: assert(HH_0 Cres == S^1/B)
i: for i in {1,2,3} do assert(HH_i Cres == 0)
\end{lstlisting}

The homological shift in the chain complex \texttt{Cres} shifts the resolution to resolve the quotient $S/B$, since the package default is to place modules corresponding to $d$-cells in homological degree $d$. 




\section{Acknowledgement}

The authors would like to thank the organizers of the Macaulay2 Workshop in Minneapolis in 2019, where work on this paper and package began.

In addition, this work is based in part on work supported by NSF RTG grant 1745638 while JY was a postdoc at Minnesota and in part upon work supported by the NSF grant 1440140, while JY was in residence at the Mathematical Sciences Research Institute in Berkeley, California, during the fall semester of 2023.

\begin{bibdiv}
  \begin{biblist}
    \bib{monomial-res}{article}{
      author={Bayer, Dave},
      author={Peeva, Irena},
      author={Sturmfels, Bernd},
      title={Monomial resolutions},
      journal={Math. Res. Lett.},
      volume={5},
      date={1998},
      number={1-2},
      pages={31--46},
      issn={1073-2780},
      doi={10.4310/MRL.1998.v5.n1.a3},
    }

    \bib{cellular-res}{article}{
      author={Bayer, Dave},
      author={Sturmfels, Bernd},
      title={Cellular resolutions of monomial modules},
      journal={J. Reine Angew. Math.},
      volume={502},
      date={1998},
      pages={123--140},
      issn={0075-4102},
      doi={10.1515/crll.1998.083},
    }
    
    \bib{clark}{article}{
      author={Clark, Timothy B. P.},
      title={Poset resolutions and lattice-linear monomial ideals},
      journal={J. Algebra},
      volume={323},
      date={2010},
      number={4},
      pages={899--919},
      issn={0021-8693},
      doi={10.1016/j.jalgebra.2009.11.029},
    }
	\bib{m2}{misc}{
        author = {Grayson, Daniel R.},
        author = {Stillman, Michael E.},
        title = {Macaulay2, a software system for research in algebraic geometry},
        url= {http://www2.macaulay2.com/Macaulay2/}
    }
    
    \bib{lyubeznik}{article}{
    AUTHOR = {Lyubeznik, Gennady},
     TITLE = {A new explicit finite free resolution of ideals generated by
              monomials in an {$R$}-sequence},
   JOURNAL = {J. Pure Appl. Algebra},
    VOLUME = {51},
      YEAR = {1988},
    NUMBER = {1-2},
     PAGES = {193--195},
      ISSN = {0022-4049},
       DOI = {10.1016/0022-4049(88)90088-6},
       URL = {https://doi.org/10.1016/0022-4049(88)90088-6},
    }
    \bib{mermin-ekres}{article}{
    AUTHOR = {Mermin, Jeffrey},
     TITLE = {The {E}liahou-{K}ervaire resolution is cellular},
   JOURNAL = {J. Commut. Algebra},
    VOLUME = {2},
      YEAR = {2010},
    NUMBER = {1},
     PAGES = {55--78},
      ISSN = {1939-0807},
       DOI = {10.1216/JCA-2010-2-1-55},
       URL = {https://doi.org/10.1216/JCA-2010-2-1-55},
        }
        
    \bib{miller}{article}{
    AUTHOR = {Miller, Ezra},
     TITLE = {Topological {C}ohen-{M}acaulay criteria for monomial ideals},
 BOOKTITLE = {Combinatorial aspects of commutative algebra},
    SERIES = {Contemp. Math.},
    VOLUME = {502},
     PAGES = {137--155},
 PUBLISHER = {Amer. Math. Soc., Providence, RI},
      YEAR = {2009},
       DOI = {10.1090/conm/502/09861},
       URL = {https://doi.org/10.1090/conm/502/09861},
    }
    
    \bib{miller-sturmfels-cca}{book}{
      AUTHOR = {Miller, Ezra},
      AUTHOR = {Sturmfels, Bernd},
      TITLE = {Combinatorial commutative algebra},
      SERIES = {Graduate Texts in Mathematics},
      VOLUME = {227},
      PUBLISHER = {Springer-Verlag, New York},
      YEAR = {2005},
      PAGES = {xiv+417},
      ISBN = {0-387-22356-8},
    }


    \bib{scarf}{book}{
    AUTHOR = {Scarf, Herbert},
     TITLE = {The computation of economic equilibria},
    SERIES = {Cowles Foundation Monograph, No. 24},
      NOTE = {With the collaboration of Terje Hansen},
 PUBLISHER = {Yale University Press, New Haven, Conn.-London},
      YEAR = {1973},
     PAGES = {x+249}
}

    \bib{m2-simplicial-complexes}{misc}{
        title = {{SimplicialComplexes: exploring abstract simplicial complexes within commutative algebra. Version~2.0}},
        author = {Gregory G. Smith},
        author = {Ben Hersey},
        author = {Alexandre Zotine},
        url = {\url{https://github.com/Macaulay2/M2/tree/master/M2/Macaulay2/packages}}
       }

    \bib{taylor}{book}{
    AUTHOR = {Taylor, Diana Kahn},
     TITLE = {Ideals generated by monomials in an $R$-sequence},
      NOTE = {Thesis (Ph.D.)--The University of Chicago},
 PUBLISHER = {ProQuest LLC, Ann Arbor, MI},
      YEAR = {1966},
     PAGES = {(no paging)},
       URL =
              {http://gateway.proquest.com/openurl?url_ver=Z39.88-2004&rft_val_fmt=info:ofi/fmt:kev:mtx:dissertation&res_dat=xri:pqdiss&rft_dat=xri:pqdiss:T-13006},
        }


    \bib{yang}{article}{
    AUTHOR = {Yang, Jay},
     TITLE = {Virtual resolutions of monomial ideals on toric varieties},
   JOURNAL = {Proc. Amer. Math. Soc. Ser. B},
    VOLUME = {8},
      YEAR = {2021},
     PAGES = {100--111},
       DOI = {10.1090/bproc/72},
       URL = {https://doi.org/10.1090/bproc/72},
}
		
  \end{biblist}
\end{bibdiv}

\end{document}